\documentclass{amsart}

\usepackage{enumerate}
\usepackage{enumitem}
\usepackage[utf8]{inputenc}
\usepackage{amsmath}
\usepackage[T1]{fontenc}
\usepackage[british]{babel}
\usepackage{amssymb}
\usepackage{amsthm}
\usepackage{mathtools}
\usepackage[paper=a4paper,left=25mm,right=25mm,top=25mm,bottom=25mm]{geometry}
\usepackage{cite}
\usepackage{hyperref}
\usepackage{cleveref}
\usepackage{bm}

\usepackage{titlesec}
\titleformat{\section}{\normalfont\Large\bfseries}{\thesection}{1em}{}
\titlespacing*{\section}
{0pt}{4.5ex plus 1ex minus .5ex}{2.8ex plus .2ex}

\titleformat{\subsection}{\normalfont\large\bfseries}{\thesubsection}{1em}{}
\titlespacing*{\subsection}
{0pt}{3.5ex plus .8ex minus .2ex}{2.3ex plus .2ex}

\makeatletter
\def\th@plain{%
  \thm@notefont{}%
  \itshape
}
\def\th@definition{%
  \thm@notefont{}%
  \normalfont
}
\makeatother

\newtheorem{theorem}{Theorem}[section]
\newtheorem{corollary}[theorem]{Corollary}
\newtheorem{lemma}[theorem]{Lemma}

\newtheorem{conjecture}[theorem]{Conjecture}

\theoremstyle{definition}
\newtheorem*{remark}{Remark}
\newtheorem*{claim}{Claim}

\newtheorem{definition}[theorem]{Definition}

\crefname{theorem}{Theorem}{Theorems}
\crefname{lemma}{Lemma}{Lemmas}
\crefname{conjecture}{Conjecture}{Conjectures}
\crefname{corollary}{Corollary}{Corollaries}
\crefname{proposition}{Proposition}{Propositions}
\crefname{definition}{Definition}{Definitions}

\title{The Abu-Khzam--Langston Conjecture for Graphs with $\bm{\alpha(G) = 2}$}
\author{Jonathan C.\ Dahlke}
\address{Technische Universität Ilmenau}
\email{jonathan.dahlke@tu-ilmenau.de}
\subjclass[2020]{05C83, 05C15, 05C70}

\date{}
\allowdisplaybreaks

\keywords{Immersion, Lescure--Meyniel Conjecture, Abu-Khzam--Langston Conjecture, chromatic number, edge-colouring, independence number, cycle-matching colouring}

\begin{document}

\begin{abstract}
    The Abu-Khzam--Langston conjecture---the weak-immersion analogue of Hadwiger's conjecture and a weak version of an earlier conjecture of Lescure and Meyniel---asserts that every graph $G$ contains a weak immersion of $K_{\chi(G)}$. We prove the conjecture for the class of graphs of independence number two.
    Along the way, we introduce a notion of \emph{cycle-matching colouring} of a graph---a relaxation of edge-colouring in which colour classes induce vertex-disjoint unions of edges and odd cycles---and prove a sharpening of Vizing's theorem in this setting: every multigraph admits a cycle-matching colouring with at most $\Delta(G)$ colours.
\end{abstract}

\maketitle

\section{Introduction}

A graph $H$ is said to be a \emph{(weak) immersion} of a graph $G$ if there is an injection $\iota \colon V(H)\to V(G)$ together with a family of pairwise edge-disjoint paths in $G$, one for each edge $uv\in E(H)$, whose endpoints are $\iota(u)$ and $\iota(v)$. The immersion is a \emph{strong immersion} if, in addition, the internal vertices of these paths avoid the image $\iota(V(H))$. Note that throughout the paper, whenever we use the term immersion, we refer to a weak immersion.

In 1989, Lescure and Meyniel~\cite{LescureMeyniel1989} formulated the following analogue of Hadwiger's famous conjecture~\cite{Hadwiger1943} for strong immersions.

\begin{conjecture}[Lescure, Meyniel \cite{LescureMeyniel1989}]\label[conjecture]{conj:LM-strong}
    Every graph $G$ contains a strong immersion of $K_{\chi(G)}$.
\end{conjecture}

Independently, Abu-Khzam and Langston~\cite{AbuKhzamLangston2003} proposed the corresponding statement for weak immersions, which is the conjecture we focus on in this paper.

\begin{conjecture}[Abu-Khzam, Langston \cite{AbuKhzamLangston2003}]\label[conjecture]{conj:AKL}
    Every graph $G$ contains a weak immersion of $K_{\chi(G)}$.
\end{conjecture}

Since every strong immersion is a weak immersion, \cref{conj:LM-strong} clearly implies \cref{conj:AKL}; both remain open in general. While Hadwiger's conjecture is currently settled only up to $\chi(G)\leq 6$~\cite{RobertsonSeymourThomas1993} and notoriously open beyond, the immersion analogues are a bit better understood: both conjectures are trivial for $\chi(G)\leq 4$ and were verified for $\chi(G)\in\{5,6\}$ by Lescure and Meyniel themselves~\cite{LescureMeyniel1989}; the case $\chi(G) = 7$ was settled by DeVos, Kawarabayashi, Mohar, and Okamura~\cite{DeVosKawarabayashi2010}. For $\chi(G)\geq 8$, both conjectures remain open.

A natural restricted setting for Hadwiger's conjecture, \cref{conj:LM-strong}, and \cref{conj:AKL} is the class of graphs of small independence number. The case $\alpha(G) = 2$ has received particular attention. An initial investigation of \cref{conj:AKL} restricted to this class of graphs was undertaken by Vergara~\cite{Vergara2017}, who, among other things, showed that for graphs with $\alpha(G)\leq 2$ it is equivalent to the assertion that every such graph contains a weak immersion of $K_{\lceil |V(G)|/2\rceil}$ and proposed 

\begin{conjecture}[Vergara \cite{Vergara2017}]\label[conjecture]{conj:vergara}
    Every graph $G$ with $\alpha(G)= 2$ contains a weak immersion of $K_{\chi(G)}$.
\end{conjecture}

A number of partial results have followed. Quiroz~\cite{Quiroz2021} verified \cref{conj:vergara} for $H$-free graphs with $|V(H)|\leq 4$. Bustamante, Quiroz, Stein, and Zamora~\cite{Bustamante2022} obtained immersion-size lower bounds in terms of the independence number for graphs of arbitrary independence number. Botler, Jiménez, Lintzmayer, Pastine, Quiroz, and Sambinelli~\cite{Botler2024} proved a biclique analogue, showing that every graph with $\alpha(G) = 2$ contains an immersion of $K_{\lceil n/2 \rceil, \lceil n/2 \rceil}$; a subsequent shorter proof was proposed by Chen and Deng~\cite{ChenDeng2024}. Most recently, Botler, Fernandes, Lintzmayer, Lopes, Mishra, Netto, and Sambinelli~\cite{Botler2025} proposed a proof of \cref{conj:vergara} for graphs satisfying the maximum-degree bound $\Delta(G) < \tfrac{19}{29}n - 1$ (with $n\geq 11$).

In this paper, we resolve \cref{conj:vergara}.

\begin{theorem}\label{thm:main_theorem}
    Let $G$ be a graph with $\alpha(G)=2$. Then $G$ contains a weak immersion of $K_{\chi(G)}$.
\end{theorem}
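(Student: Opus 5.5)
We will use Vergara's reduction: for graphs with $\alpha(G)\le 2$, the statement is equivalent to showing that $G$ contains a weak immersion of $K_{\lceil n/2\rceil}$, where $n=|V(G)|$. We argue by induction on $n$, taking $G$ to be a minimal counterexample. Standard reductions then apply. If some vertex $v$ has non-neighbourhood $\overline{N}(v)$ that is a clique, we can colour $\{v\}\cup\overline{N}(v)$ with one colour and recurse on $N(v)$, which again has $\alpha\le 2$. Likewise, if $G$ contains an induced $C_5$ we can remove it at the cost of about two colours. Together with the known results for small chromatic number, these reductions let us assume that $G$ is "$\chi$-critical-like": $\chi(G)=\lceil n/2\rceil$, the complement $\overline{G}$ is triangle-free, and $\overline{G}$ has no perfect-ish matching structure that would lower $\chi$. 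Indeed, since $\alpha\le2$, $\chi(G)=n-\nu(\overline{G})$, where $\nu$ is the matching number, so $\chi=\lceil n/2\rceil$ forces $\overline{G}$ to have a near-perfect matching.

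The construction runs as follows. Fix a maximum matching $M$ of $\overline{G}$, with pairs $\{a_i,b_i\}$ for $1\le i\le k$, where $k=\lfloor n/2\rfloor$, together with possibly one unmatched vertex. Choose one representative of each pair as a branch vertex, giving $k$ or $k+1$ branch vertices. An edge between two chosen representatives is either present in $G$, or missing because the two representatives are adjacent in $\overline{G}$. Each missing edge must be routed through a path of length two via a non-chosen vertex, or of length three. Because $\overline{G}$ is triangle-free, $a_i$ and $b_i$ have disjoint $\overline{G}$-neighbourhoods, so every other vertex is adjacent in $G$ to at least one of $a_i,b_i$. This gives many candidate connectors. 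The aim is therefore to pick representatives and route the missing edges, the "demand graph" $D$, through the non-chosen partners using pairwise edge-disjoint paths.

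The key step is scheduling these detours edge-disjointly. Through a non-chosen vertex $b_j$, paths of the form $x\,b_j\,y$ consume the edges $xb_j$ and $yb_j$. A single $b_j$ can therefore serve a set of demand edges that form a matching in $D$. It can also serve an odd cycle, provided the cycle is used with appropriate rerouting, because each edge at $b_j$ may be used only once. This is exactly the purpose of a decomposition of $D$ into classes that are vertex-disjoint unions of edges and odd cycles, assigned one class per connector. I would prove the cycle-matching colouring statement from the abstract, that every multigraph admits such a colouring with at most $\Delta$ colours, by a Vizing-style fan/Kempe-chain recolouring argument. Odd cycles absorb the obstruction that normally forces $\Delta+1$ colours, as in the bad case of the triangle-type multigraphs. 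Taking representatives so that $\Delta(D)$, which is bounded by $\overline{G}$-degrees, is at most the number of available connectors, and assigning one colour class to each $b_j$, yields the immersion.

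The main obstacle is the compatibility requirement: connector $b_j$ must be $G$-adjacent to every vertex of its assigned colour class, and its own partner $a_j$ cannot use it. I would handle this first by choosing representatives to minimise $\Delta(D)$, using triangle-freeness to swap $a_i\leftrightarrow b_i$. Next I would allow the colouring to be built on a multigraph that encodes only the allowed connector–edge incidences, and use length-three detours $x\,b_i\,b_j\,y$ to repair the remaining conflicts. I would then verify that a counting argument, via Hall's theorem over colour classes and connectors, succeeds whenever $n$ is large enough, and handle small $n$ with the known cases $\chi\le 7$.
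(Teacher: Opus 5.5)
\textbf{There is a genuine gap: the central routing step fails as described.} You give each colour class of a cycle-matching colouring of the demand graph $D$ to a single connector $b_j$, and route each demand edge $xy$ of that class as $x\,b_j\,y$. For a matching class this can work, provided $b_j$ is $G$-adjacent to all endpoints. For an odd-cycle class $x_1x_2\cdots x_l$ it cannot: the paths $x_{i-1}\,b_j\,x_i$ and $x_i\,b_j\,x_{i+1}$ both need the edge $x_ib_j$, so edge-disjointness fails at every vertex of the cycle. No ``rerouting'' inside one connector repairs this, so the bound $\chi_2'(D)\le\Delta(D)$ does not yield an immersion.

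The paper uses cycle-matching colourings with the roles reversed:
\begin{itemize}
\item the vertices of the coloured graph $H$ are pair classes $A\in\mathfrak X_v$;
\item the colours are corners $y\in\mathcal C_{\mathfrak Y}$ of an immersion of $G[\bigcup\mathfrak Y]$ obtained by induction;
\item an odd cycle $A_0A_1\cdots A_{l-1}$ of colour $y$ is realised by the paths $y\,p_{A_i}\,c_{A_{i+1}}$, so each class lends its own non-corner vertex to its successor and every edge is used once.
\end{itemize}
This works only when $yp_{A_i}\in E(G)$ for all $i$. That is why the paper needs the list-type \cref{Lem:criticalLemma}, with monochromatic odd cycles of colour $c$ confined to $\{x: c\in L_x\}$ and escape sets $F,F'$ for $M$- and $R$-vertices, rather than the bare Vizing-type bound.

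\textbf{The compatibility constraint is left unproved.} The constraint you flag as the main obstacle is exactly such a list condition, and your treatment of it is a placeholder. Minimising $\Delta(D)$ by swapping representatives, ``length-three detours'', and a Hall-type count that ``succeeds for $n$ large enough'' are not arguments. No threshold is given, and in your reduced setting the known cases $\chi\le 7$ cover only $n\le 14$. In the paper this counting comes from two places. First, the optimal colouring is chosen to maximise $\mathfrak X$, which gives \eqref{eq:maximality} and hence $d^+_{D_v}(A)\ge|L_A|+|M_A|$ in the bridge digraph. Second, the faithful immersion on $V(G)\setminus\bigcup\mathfrak Y$ is built explicitly, with its length-three paths $c_Ap_Bp_Ac_B$ justified by \cref{lem:clique-stuff} and \cref{lem:4classes_big_connection}. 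Nothing in your plan replaces either ingredient.

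\textbf{Smaller problems.}
\begin{itemize}
\item Your first reduction is vacuous and wrong. When $\alpha(G)\le 2$, every non-neighbourhood $\overline N(v)$ is already a clique, and $\{v\}\cup\overline N(v)$ is not independent once $|\overline N(v)|\ge 2$.
\item Deleting an induced $C_5$ leaves you to attach two or three new corners edge-disjointly to all old corners. That is no easier than the original problem.
\item The Kempe-chain proof of $\chi_2'\le\Delta$ is only asserted, with no indication of how alternating chains behave when colour classes contain odd cycles. The paper instead doubles the edges and applies Lov\'asz's $f$-factor theorem.
\end{itemize}
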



\paragraph{\textbf{Notation.}} We use $\chi(G), \alpha(G), \Delta(G), \chi'(G)$ for the chromatic number, independence number, maximum degree, and chromatic index of $G$, respectively. For two vertex sets $A, B\subseteq V(G)$, $E(A, B)$ denotes the set of edges with one endpoint in $A$ and the other in $B$. We write $E(v, A) \coloneq E(\{v\}, A)$, $E(A)\coloneq E(A, A)$, and  $d_G(v) = |E(v, V(G))|$ for the degree of $v$. For an edge set $F\subseteq E(G)$, $G[F]$ is the graph on $V(G)$ with edge set $F$. For a vertex set $A\subseteq V(G)$, $G[A]$ is the graph on $A$ with edge set $E(A)$. A component of $G$ consisting of a single vertex is called trivial. A vertex set $X$ is spanned by an edge set $F$, if every $x\in X$ has $d_{G[F]}(x)>0$. For notation not defined here, we refer to \cite{diestel2025graph}. Throughout, all graphs are finite and loopless; the multigraphs appearing in \cref{sec:vizing} are allowed to have parallel edges.

\section{A Vizing-type theorem for cycle-matching colourings}\label{sec:vizing}

In this section we develop the edge-colouring tool that lies at the heart of our argument. Throughout the section, $G$ may be a multigraph.

\begin{definition}
    A map $f: E(G)\to \mathcal C$ is called an \emph{$r$-bounded regular colouring} if, for each $c\in \mathcal C$, each component of the subgraph $G[f^{-1}(c)]$ is regular of degree at most $r$.

    For $r = 2$, we also call $f$ a \emph{cycle-matching colouring}, since each colour class is then a vertex-disjoint union of isolated edges and odd cycles.

    The associated chromatic index $\chi'_r(G)$ is the smallest number of colours admitting an $r$-bounded regular colouring of $G$.
\end{definition}

For $r=1$ this definition recovers the standard notion of chromatic index, and so $\chi'_1(G) = \chi'(G)$. As every $r$-bounded regular colouring is also $r'$-bounded regular for $r'\geq r$, the parameter $\chi'_r(G)$ is non-increasing in $r$; in particular it is well defined for every $r\in \mathbb N^+$.

By a theorem of Shannon~\cite{shannon1949}, $\chi'_r(G)\leq \chi'(G)\leq \frac{3}{2}\Delta(G)$, where the upper bound can be sharpened to $\Delta(G) + 1$ for simple graphs~\cite{Vizing1964}. For $r\geq 2$, in \cite{cornaznguyen2013}, the authors introduce the notion of \emph{ocm sets}, which are edge sets consisting of vertex disjoint odd cycles and isolated edges, and they prove that every graph $G$ is covered by exactly $\Delta(G)$ ocm sets and no fewer. For them, covered means that an edge is contained in at least two odd cycles of different ocm sets of the covering or is an isolated edge in at least one of the sets. This implies

\begin{theorem}[Cornaz, Nguyen \cite{cornaznguyen2013}]\label{thm:cornaznguyen}
    For every multigraph $G$, there exists an edge set $F\subseteq E(G)$ that spans every vertex of maximum degree in $G$.
\end{theorem}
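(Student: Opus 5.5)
The idea is to get the statement, with $F$ an \emph{ocm set} (a vertex-disjoint union of isolated edges and odd cycles), from the covering theorem of Cornaz and Nguyen quoted above. We use a double-counting argument at each vertex of maximum degree. The argument will show that \emph{every} member of a cover by $\Delta(G)$ ocm sets spans every vertex of maximum degree. Any single member $F$ of the cover is then the required edge set.

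Let $\Delta=\Delta(G)$. If $\Delta=0$ there is nothing to prove, so assume $\Delta\geq 1$. By \cite{cornaznguyen2013}, fix ocm sets $F_1,\dots,F_\Delta$ covering $G$ in their sense: every edge is either an isolated edge of some $F_i$, or lies in odd cycles of at least two distinct $F_i$.

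We assign weights as follows. For an edge $e$ and an index $i$, put $w_i(e)=1$ if $e$ is an isolated edge of $F_i$, put $w_i(e)=\tfrac12$ if $e$ lies on an odd cycle of $F_i$, and put $w_i(e)=0$ otherwise. The covering condition says exactly that $\sum_i w_i(e)\geq 1$ for every edge $e$.

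Now fix a vertex $v$ with $d_G(v)=\Delta$. We bound the local contribution $\sum_{e\in E(v,V(G))} w_i(e)$ of each $F_i$ at $v$. Since the components of $G[F_i]$ are vertex-disjoint isolated edges and odd cycles, exactly one of three cases occurs:
\begin{itemize}
\item $v$ is unspanned by $F_i$, and the contribution is $0$;
\item $v$ is an endpoint of an isolated edge of $F_i$, and the contribution is $1$;
\item $v$ lies on an odd cycle of $F_i$, and the contribution is $2\cdot\tfrac12=1$.
\end{itemize}
In particular, the contribution of each $F_i$ at $v$ is at most $1$, with equality if and only if $F_i$ spans $v$.

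Summing gives
\[
\Delta = d_G(v) \le \sum_{e\in E(v,V(G))}\sum_{i=1}^{\Delta} w_i(e) = \sum_{i=1}^{\Delta}\sum_{e\in E(v,V(G))} w_i(e) \le \Delta .
\]
Both inequalities must therefore be equalities. Hence every $F_i$ contributes exactly $1$ at $v$, which means every $F_i$ spans $v$. As $v$ was an arbitrary vertex of maximum degree, any $F_i$, say $F=F_1$, spans all vertices of maximum degree.

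The only delicate point is making sure parallel edges cause no trouble. This holds because the covering theorem is stated for multigraphs. Moreover, a vertex on an odd cycle of $F_i$ has exactly two incident cycle edges in that component, even when the cycle has length $2$ would be excluded by oddness; the shortest odd cycle is a triangle, or a loop, and loops are excluded. So the per-set bound of $1$ is robust, and I expect no real obstacle beyond stating this weighting cleanly.
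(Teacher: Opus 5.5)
Your deduction is correct. The weights turn the Cornaz--Nguyen covering condition into $\sum_i w_i(e)\ge 1$. Each ocm set contributes at most $1$ at any vertex, with equality exactly when it spans that vertex. Double counting at a vertex of degree $\Delta$ then forces all $\Delta$ sets to span it. You also rightly read $F$ as an ocm set; as printed, $F=E(G)$ would trivially do.

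Your argument is exactly the step the paper waves through with ``This implies''. The proof the paper actually carries out is different:
it doubles every edge and applies Lovász's $f$-factor theorem with $f\equiv 2$ to a $\subseteq$-minimal pair $(S,T)$ of maximum deficiency. It normalises a maximum $2$-bounded subgraph into $2$-cycles and odd cycles. It shows $T$ is independent with $N_G(T)=S$, and that $S$ is matched into $T$ with Hall surplus. A Hall/transversal-matroid exchange inside $S\cup T$ then gives every maximum-degree vertex degree $2$ (\cref{cor:2-factor-properties}(6)). Finally it collapses $2$-cycles to single edges.

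Your route is far shorter, and it yields the stronger fact that every member of any $\Delta$-cover spans all maximum-degree vertices. But it black-boxes the covering theorem, which carries all the real difficulty. For multigraphs it is only as good as that citation: the paper quotes it only for graphs, so your assertion that it is stated for multigraphs is the one point you would need to verify.

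The paper's route is self-contained modulo the $f$-factor theorem and handles parallel edges directly. Crucially for the author, it exposes the obstruction pair $(S,T)$, whose structure drives the marking and parity argument in \cref{Lem:criticalLemma}. The double count provides no such structural information.

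Your closing worry about short cycles is unnecessary. Any cycle of length at least two through $v$ has exactly two edges at $v$, so the per-set bound of $1$ holds regardless of parity.
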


We obtain by an induction argument provided later in this paper:

\begin{corollary}\label[corollary]{thm:vizing-cycle-matching}
    For every multigraph $G$ and every integer $r\geq 2$,
    \[
        \chi'_r(G) \leq \Delta(G).
    \]
\end{corollary}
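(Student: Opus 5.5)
Since $\chi'_r(G)$ is non-increasing in $r$, it suffices to treat $r=2$, that is, to show $\chi'_2(G)\le \Delta(G)$. We argue by induction on $\Delta(G)$, with the whole work done by \cref{thm:cornaznguyen}. If $\Delta(G)=0$ there are no edges and nothing is to be shown. If $\Delta(G)\ge 1$, take the edge set $F\subseteq E(G)$ from \cref{thm:cornaznguyen}. We may assume each component of $G[F]$ is either a single edge or an odd cycle; that is, $F$ is an ocm set, as in the covering result of Cornaz and Nguyen from which \cref{thm:cornaznguyen} is derived. Then $F$ is a legitimate single colour class for a cycle-matching colouring, since every nontrivial component of $G[F]$ is $1$- or $2$-regular.

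Next consider $G' = G - F$, the multigraph on $V(G)$ with edge set $E(G)\setminus F$. The key step is the degree bound $\Delta(G')\le \Delta(G)-1$. Since $F$ spans every vertex of maximum degree, each vertex $v$ with $d_G(v)=\Delta(G)$ loses at least one incident edge, so $d_{G'}(v)\le \Delta(G)-1$. Every other vertex already has $d_G(v)\le \Delta(G)-1$, and removing edges never increases degrees. By induction, $G'$ admits a cycle-matching colouring with at most $\Delta(G)-1$ colours. We give $F$ a fresh colour and obtain a cycle-matching colouring of $G$ with at most $\Delta(G)$ colours.

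The only point needing care is the structure of $F$: the statement of \cref{thm:cornaznguyen} as quoted says only ``edge set'', and the argument needs $F$ to consist of vertex-disjoint isolated edges and odd cycles. I would check this against the ocm-cover formulation. In a cover of $G$ by $\Delta(G)$ ocm sets, every edge at a maximum-degree vertex $v$ must be covered. If none of these ocm sets touched $v$, its $\Delta(G)\ge1$ incident edges could not be covered. Pigeonholing over the cover is not quite enough to get one set touching all maximum-degree vertices, so I would rely on the spanning ocm set being part of the Cornaz--Nguyen result itself, which is the intended reading of \cref{thm:cornaznguyen}.

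If one prefers to avoid this reading, an alternative is to take any spanning $F$ that is minimal with respect to inclusion. Its components are then stars, since minimality forbids an edge whose two endpoints are both otherwise covered. Stars are not regular, however, so this weaker version would need further repair. I therefore regard the ocm structure of $F$ as the essential input and the induction as routine.
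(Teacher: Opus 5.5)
Your proof is correct and is the paper's own induction on $\Delta(G)$: remove one ocm set $F$ spanning all maximum-degree vertices, give it a colour, and recurse on $G-F$. The only difference is the source of $F$: the paper gets it from \cref{cor:2-factor-properties}(6) on the doubled graph, i.e.\ from Lov\'asz's $f$-factor theorem, whereas you cite Cornaz--Nguyen under the intended ocm reading of \cref{thm:cornaznguyen}.

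Your hesitation about the pigeonhole step is unfounded, so that reading needs no special pleading. At a vertex $v$ with $d_G(v)=\Delta(G)$, each of the $\Delta(G)$ ocm sets in the cover contributes covering weight at most $1$ (an isolated edge counts $1$, an odd cycle through $v$ counts $\tfrac12+\tfrac12$). The $\Delta(G)$ edges at $v$ need total weight at least $\Delta(G)$, so every set of the cover spans every maximum-degree vertex.
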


We shall give a different proof of \cref{thm:cornaznguyen}, as we will---to give a specialised lemma needed for the proof of \cref{thm:main_theorem}---explore the structure of obstructions to the existence of spanning colour classes.

Note that the bound in \cref{thm:vizing-cycle-matching} is best possible for any $r\geq 2$: for the star $K_{1, s}$, one has $\chi'_r(K_{1, s}) = s$ for every $r\in \mathbb N^+$. The lower bound of $\Delta(G)$ that applies to the covering by ocm sets does not apply, as we do not require an edge contained in an odd cycle to be covered by more than one set in our notion of colouring. Instead, we have $\chi'_r(G)\geq \Delta(G)/r$ (forced by the edges at a vertex of maximum degree). This is tight---it is attained, for example, when $G$ decomposes into $s$ edge-disjoint $r$-regular spanning subgraphs, in which case $\chi'_r(G) = s = \Delta(G)/r$.

\medskip
To prove \cref{thm:vizing-cycle-matching} we use the classical $f$-factor theorem of Lovász~\cite{Lovasz1970} (see also \cite[Theorem~10.2.18]{LovaszPlummer1986} or \cite[Chapter 31]{schrijver2003}; for a detailed write-up of how to get to the formulation used here, see \cite{QuWest2026}).

\begin{definition}
    Let $f\colon V(G)\to \mathbb N$ be a function pointwise bounded by the degree function $d_G$. A subgraph $H\leq G$ is called \emph{$f$-bounded} if its degree function is pointwise bounded by $f$.

    For an ordered pair of disjoint vertex sets $(S, T)$, the \emph{deficiency} is
    \[
        \mathrm{def}(S, T) = f(T) - f(S) + q(S, T) - d_{G-S}(T),
    \]
    where $q(S, T)$ is the number of components $C$ of $G-(S\cup T)$ for which $f(V(C)) + |E(C, T)|$ is odd and we mean $f(T) = \sum_{a\in T}f(a)$. 
\end{definition}

\begin{theorem}[Lovász \cite{Lovasz1970}]\label{thm:f-factor}
    Let $(S, T)$ be a pair of disjoint vertex sets. Then every $f$-bounded subgraph has degree-sum at most $f(V(G)) - \mathrm{def}(S, T)$. Moreover, if $(S, T)$ is a pair of maximum deficiency, there exists an $f$-bounded subgraph $H$ with degree-sum exactly $f(V(G)) - \mathrm{def}(S, T)$. In that case, $H$ is called a maximum $f$-bounded subgraph.
\end{theorem}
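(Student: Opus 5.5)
The statement has two parts. The first is a weak-duality inequality, which I would prove by direct counting. The second is the min--max attainment, which I would reduce to the Tutte--Berge formula for matchings via Tutte's gadget construction.

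\textbf{Upper bound.} Fix an $f$-bounded subgraph $H$ and a pair $(S,T)$. Write $\delta(v)=f(v)-d_H(v)\ge 0$, so the degree-sum of $H$ equals $f(V(G))-\sum_v\delta(v)$. It therefore suffices to show $\sum_v \delta(v)\ge \mathrm{def}(S,T)$. I would establish three estimates.
\begin{enumerate}
\item For $T$: every $H$-edge at $T$ either goes to $S$ or is an edge of $G-S$. Hence
\[
f(T)-\delta(T)=d_H(T)\le d_{G-S}(T)-m_T+e_H(S,T),
\]
where $m_T$ counts edges of $G-S$ at $T$ that are not in $H$, with multiplicity as in $d_{G-S}(T)$.
\item For $S$: $e_H(S,T)+e_H(S,V\setminus(S\cup T))\le d_H(S)=f(S)-\delta(S)$.
\item For each component $C$ of $G-(S\cup T)$ counted by $q(S,T)$, parity gives
\[
f(V(C))-\delta(C)=d_H(C)\equiv e_H(C,S)+e_H(C,T)\pmod 2.
\]
Since $f(V(C))+|E(C,T)|$ is odd, at least one of the following holds: $\delta(C)\ge1$, or some edge of $E(C,T)$ is missing from $H$, or $e_H(C,S)\ge1$. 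Each of these is charged to $\delta(C)$, to $m_T$, or to the slack $e_H(S,V\setminus(S\cup T))$ in the second estimate, respectively. Distinct components use distinct charges.
\end{enumerate}
Adding the three estimates yields $\delta(S)+\delta(T)+\sum_C\delta(C)\ge f(T)-f(S)+q(S,T)-d_{G-S}(T)$, which is the claim.

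\textbf{Attainment.} Here I would use Tutte's reduction. Replace each vertex $v$ by $d_G(v)$ "edge-port" vertices together with $d_G(v)-f(v)$ "core" vertices, and make every core vertex adjacent to every port of $v$. Join the two ports corresponding to each edge $uv$. A maximum matching in this auxiliary graph $G^*$ corresponds to a maximum $f$-bounded subgraph, where the number of unmatched vertices translates into the total deficit $\sum_v\delta(v)$. I would then apply the Tutte--Berge formula to $G^*$ and obtain a barrier $X$. By the standard Gallai--Edmonds normalisation, $X$ can be taken to be the canonical set $A(G^*)$. One then checks that, for each original vertex $v$, the barrier either contains all core vertices of $v$ (put $v\in S$) or all ports of $v$ (put $v\in T$), or meets the gadget in a way that can be uncrossed without decreasing the Tutte--Berge deficiency. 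Odd components of $G^*-X$ then correspond exactly to the components counted in $q(S,T)$, and the matching deficiency equals $\mathrm{def}(S,T)$. Combined with the first part, this shows that the pair $(S,T)$ obtained this way has maximum deficiency, and that a maximum $f$-bounded subgraph attains $f(V(G))-\mathrm{def}(S,T)$. Since the bound depends only on the maximum value of $\mathrm{def}$, the same conclusion holds for every pair of maximum deficiency.

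\textbf{Main obstacle.} The hardest step is the uncrossing of the barrier into a clean pair $(S,T)$, i.e.\ showing that a barrier can always be chosen to respect the gadgets. I would first try to exploit the fact that all ports of $v$ are twins with respect to the core vertices of $v$, and that the core vertices of $v$ are twins with respect to the ports. From this, moving whole twin classes into or out of $X$ should change the Tutte--Berge deficiency monotonically.
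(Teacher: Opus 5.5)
\textbf{Where the paper stands, and the first half of your proof.} The paper gives no proof of this statement. It quotes it from Lov\'asz, with pointers to Lov\'asz--Plummer, Schrijver and Qu--West for this formulation, so your argument is a self-contained alternative. Your weak-duality half is correct. Strictly, combining your three estimates yields $\delta(T)-\delta(S)+\sum_C\delta(C)\ge\mathrm{def}(S,T)$, which suffices since $\delta(S)\ge 0$.

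\textbf{The attainment half.} This part is only a plan, and the dictionary you state for it is reversed. Write $P_v$ for the ports and $Z_v$ for the cores of $v$, and $\phi(X)=o(G^*-X)-|X|$.
\begin{itemize}
\item If $X\cap(P_v\cup Z_v)=P_v$, the $d_G(v)-f(v)$ cores become isolated odd components while $|X|$ grows by $d_G(v)$. The net contribution is $-f(v)$, so this is a vertex of $S$, not of $T$.
\item If $X\cap(P_v\cup Z_v)=Z_v$, the contribution is $-(d_G(v)-f(v))$ plus one isolated port per edge from $v$ to $S$. That equals $f(v)-d_{G-S}(v)$, so this is a vertex of $T$.
\item A vertex with $f(v)=d_G(v)$ has no cores, so its ports are pairwise non-adjacent and its gadget is disconnected. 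Such vertices must be put into $T$; otherwise the odd components of $G^*-X$ do not match those counted by $q(S,T)$. Example: take a triangle $vab$ with pendant edge $vc$, and $f(v)=f(a)=f(b)=2$, $f(c)=1$. The barrier $X=P_v$ has $\phi(X)=1$, but $\mathrm{def}(\{v\},\emptyset)=-1$, whereas $\mathrm{def}(\{v\},\{a,b,c\})=1$.
\end{itemize}

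\textbf{The missing uncrossing step.} The real gap is the uncrossing itself, which you leave as ``should change the deficiency monotonically''. Your twin heuristic handles cores but not ports. The ports of $v$ are not twins in $G^*$, since each has its own partner port, and the canonical set $A(G^*)$ really does cut gadgets. In the same graph but with $f(a)=f(b)=1$, $G^*$ is a $9$-cycle through the core of $v$ with a path of length two hanging at that core. Then $A(G^*)$ is the single port of $v$ on the edge $vc$.

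\textbf{How to close it.} A one-vertex deletion step works for any barrier, with no Gallai--Edmonds theory. Suppose $X$ maximises $\phi$ and the neighbours of some $x\in X$ outside $X$ lie in at most two components of $G^*-X$. Removing $x$ changes $\phi$ by $+2$ if $x$ sees no odd component and by $0$ otherwise, so $X\setminus\{x\}$ is again a maximiser. This applies to:
\begin{itemize}
\item a core in $X$ that has a twin core outside $X$;
\item a port of $v$ in $X$ when $Z_v\subseteq X$;
\item a port of $v$ in $X$ when $Z_v\cap X=\emptyset$ but some port of $v$ lies outside $X$, since then all of $Z_v$ sits in one component.
\end{itemize}
Iterating gives a maximiser with $X\cap(P_v\cup Z_v)\in\{\emptyset,P_v,Z_v\}$ for every $v$, and every core-less vertex ends with $X\cap P_v=\emptyset$. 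Set $S=\{v: Z_v\neq\emptyset,\ P_v\subseteq X\}$ and $T=\{v: X\cap(P_v\cup Z_v)=Z_v\}$. A direct count then gives $\phi(X)=\mathrm{def}(S,T)$. Combine this with your weak duality and with the routine trimming step: a maximum matching of $G^*$ yields an $f$-bounded subgraph whose total deficit is at most the number of unmatched vertices. Together these prove the theorem.
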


We apply this to the special case of doubled graphs, where every edge has even multiplicity and $f\equiv 2$:

\begin{corollary}\label[corollary]{cor:2-factor-properties}
    Let $G$ be a multigraph in which every edge has even multiplicity, and let $f \equiv 2$. Among all pairs $(S, T)$ of disjoint vertex sets attaining the maximum deficiency, choose one that is $\subseteq$-minimal.

    Then there exists a maximum $f$-bounded subgraph $H\leq G$ with the following properties:
    \begin{enumerate}
        \item $H$ is a disjoint union of $2$-cycles and odd cycles;
        \item $\mathrm{def}(S, T) = f(T) - f(S) = 2|T| - 2|S|$;
        \item $T$ is an independent set and $N_G(T) = S$;
        \item $H[S\cup T]$ consists of $2$-cycles together with $|T| - |S|$ isolated vertices, all of which lie in $T$;
        \item for every nonempty $X\subseteq S$, $|N_G(X)\cap T| > |X|$;
        \item every vertex $v\in V(G)$ with $d_G(v) = \Delta(G)$ satisfies $d_H(v) = 2$.
    \end{enumerate}
\end{corollary}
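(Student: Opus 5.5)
Since every edge of $G$ has even multiplicity, parity conditions simplify. With $f\equiv 2$, $f(V(C))+|E(C,T)|$ is even for every component $C$ of $G-(S\cup T)$, so $q(S,T)=0$. Hence $\mathrm{def}(S,T)=2|T|-2|S|-d_{G-S}(T)$. I will use this formula to derive (2)--(5) from the maximality and minimality of $(S,T)$. I will then choose $H$ carefully among the maximum $f$-bounded subgraphs to get (1), (4) and (6).

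For (2) and (3), take $v\in T$ with $d_{G-S}(v)>0$. Moving $v$ out of $T$ lowers $2|T|$ by $2$ and lowers $d_{G-S}(T)$ by $d_{G-S}(v)$. This is at least $2$ by even multiplicity, since edges inside $T$ are counted twice in $d_{G-S}(T)$. So the deficiency does not decrease, and the smaller pair contradicts $\subseteq$-minimality. Hence $d_{G-S}(T)=0$, which gives (2), shows that $T$ is independent, and gives $N_G(T)\subseteq S$. For the reverse inclusion, and for (5), take $X\subseteq S$ with $|N_G(X)\cap T|\le |X|$ (a vertex of $S$ with no neighbour in $T$ is the case $|X|=1$). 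Replacing $(S,T)$ by $(S\setminus X,\,T\setminus N_G(X))$ keeps $T\setminus N_G(X)$ free of edges to $G-(S\setminus X)$. The deficiency changes by $-2|N(X)\cap T|+2|X|\ge 0$, again contradicting minimality. Thus (5) holds, and with it $N_G(T)=S$.

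Next, the choice of $H$. Theorem~\ref{thm:f-factor} gives a maximum $f$-bounded $H$ with degree-sum $2|V|-2|T|+2|S|$. First I normalise $H$. A component of a graph of maximum degree $\le 2$ is a path, an even cycle or an odd cycle. A path or an even cycle can be replaced by parallel $2$-cycles on a matching plus possibly one path edge, since every edge has multiplicity $\ge 2$. This keeps degree-sum, or raises it (impossible by maximality), so (1) holds. For (4), vertices of $T$ see only $S$, so the total degree of $T$ in $H$ is at most the total degree of $S$, which is at most $2|S|$. The degree-sum bound forces a deficit of exactly $2|T|-2|S|$, and all of it must sit in $T$. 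Hence every $S$-vertex has both $H$-edges going to $T$, every non-$(S\cup T)$ vertex is saturated, and $T$ has exactly $2|S|$ degree. After normalisation the $S$--$T$ edges form $2$-cycles on a matching of $S$ into $T$ (odd cycles are impossible in the bipartite part), which leaves $|T|-|S|$ isolated vertices of $T$.

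For (6), the deficient vertices are exactly the unmatched vertices of $T$, and I need them to avoid the maximum-degree vertices. Take $t\in T$ with $d_G(t)=\Delta$. Every vertex in $S$ has a neighbour in $T$ and degree at most $\Delta$, and $N(t)\subseteq S$ with multiplicities. So by a Hall-type argument using (5), there is a matching of $S$ into $T$ covering every $T$-vertex of degree $\Delta$. I would obtain this by applying Hall to the bipartite multigraph $G[S,T]$: $\Delta$-vertices of $T$ have the largest degree, so the set $D$ of them satisfies $|N(D')|\ge |D'|$ for every $D'\subseteq D$, by counting edges. Combining this with the $S$-saturating matching from (5) via the standard Mendelsohn--Dulmage argument gives a matching saturating $S$ and covering $D$. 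I then use it for the $2$-cycles of $H[S\cup T]$, leaving $H$ unchanged elsewhere. This step is the main obstacle, in particular checking that the edge-counting Hall inequality holds with multiplicities.
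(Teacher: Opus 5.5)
Your proposal is correct and follows essentially the same route as the paper: parity gives $q(S,T)=0$; $\subseteq$-minimality rules out $T$-vertices with $d_{G-S}>0$, and via the pair $(S\setminus X, T\setminus N_G(X))$ any nonempty $X\subseteq S$ without Hall surplus; the degree-sum count forces the $S$--$T$ $2$-cycle structure; and an edge-counting Hall argument combined with Mendelsohn--Dulmage rematches $S$ to cover the $\Delta$-vertices of $T$ (the paper does this via transversal-matroid augmentation, choosing $H$ extremal instead of constructing it, and rules out $|N_G(X)\cap T|<|X|$ via the matching rather than your direct swap). One slip to fix: an odd-order path should be replaced by $2$-cycles leaving one endpoint uncovered, not by $2$-cycles ``plus one path edge'', since that edge would push a vertex to degree $3$ and break (1).
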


\begin{proof}
    We may choose $H$ to consist only of odd cycles and $2$-cycles as paths can be replaced by $2$-cycles spanning their vertices (possibly missing one vertex due to parity), and even cycles by a spanning set of $2$-cycles, without lowering the number of edges of $H$ and thus the degree-sum. Among all such choices of $H$, fix one covering the maximum number of vertices that have degree $\Delta(G)$ in $G$.

    Since every edge has even multiplicity, every edge-cut of $G$ has even size, and for any vertex set $A\subseteq V(G)$, $f(A) = 2|A|$ is even; hence $q(S, T) = 0$. If some vertex $x\in T$ satisfies $d_{G-S}(x) > 0$, then in fact $d_{G-S}(x) \geq 2$ (by even multiplicity), and
    \begin{align*}
        \mathrm{def}(S, T\setminus\{x\})
        &= f(T\setminus\{x\}) - f(S) - d_{G-S}(T\setminus \{x\}) \\
        &= \bigl(f(T) - 2\bigr) - f(S) - \bigl(d_{G-S}(T) - d_{G-S}(x)\bigr) \\
        &= \mathrm{def}(S, T) + d_{G-S}(x) - 2 \;\geq\; \mathrm{def}(S, T),
    \end{align*}
    contradicting the $\subseteq$-minimality of $(S, T)$. Hence $d_{G-S}(x)= 0$ for every $x\in T$; in particular, $T$ is independent and $N_G(T)\subseteq S$. For the reverse inclusion, $\mathrm{def}(N_G(T), T) = f(T) - f(N_G(T)) \geq f(T) - f(S) = \mathrm{def}(S, T)$, so minimality forces $S = N_G(T)$.

    Suppose now some odd cycle $C$ of $H$ contains a vertex of $S\cup T$. Since $V(C)\cap T$ is independent in $C$, the cycle has more vertices of $S$ than of $T$, so some $S$-vertex has only one $H$-edge into $T$. But then
    \[
        2|S| \;=\; f(T) - \mathrm{def}(S, T) \;\leq\; \sum_{v\in T}d_H(v) \;=\; |E_H(T, S)| \;\leq\; \sum_{y\in S} d_H(y) \;=\; 2|S|,
    \]
    forcing $|E_H(T, S)| = \sum_{y\in S} d_H(y)$, a contradiction. Thus every vertex of $S$ lies in a $2$-cycle of $H$ whose other vertex is in $T$, and the remaining $|T|-|S|$ vertices of $T$ are isolated in $H$.

    Let now $\emptyset\neq X\subseteq S$. Since every vertex of $S$ is paired by a $2$-cycle with a (distinct) vertex of $T$, we have $|N_G(X)\cap T|\geq|X|$. Suppose $|N_G(X)\cap T| = |X|$ and consider $(A, B) = (S\setminus X,\; T\setminus N_G(X))$. Since we removed exactly those vertices of $T$ having a neighbour in $X$, $N_G(B)\subseteq A$, and so $\mathrm{def}(A, B) = 2|B| - 2|A| = \mathrm{def}(S, T)$, contradicting the $\subseteq$-minimality of $(S, T)$.

    Finally, let $X\subseteq T$ be the set of vertices of $T$ of maximum $G$-degree. As $|N_G(Y)\cap S|\geq|Y|$ for every $Y\subseteq X$, otherwise there must be a vertex in $N_G(Y)$ of larger degree, Hall's theorem provides a set of disjoint $2$-cycles covering $X$ (using only edges of $G$, each of multiplicity $\geq 2$). The family of subsets of $T$ covered by disjoint $2$-cycles in $G[S\cup T]$ forms a transversal matroid of rank $|S|$; we can therefore choose a set $F$ of disjoint $2$-cycles covering all of $S\cup X$. If some vertex of $X$ had degree 0, replacing $E(H[S\cup T])$ by $F$ would yield an $f$-bounded subgraph covering strictly more vertices of maximum degree, contradicting our choice of $H$. Vertices of $V(G)\setminus(S\cup T)$ of degree $\Delta(G)$ lie on cycles of $H$ and there their $H$-degree is $2$. This proves property~(6).
\end{proof}

\begin{proof}[Proof of \cref{thm:vizing-cycle-matching}]
    Induction on $\Delta(G)$. The base case $\Delta(G) = 0$ is trivial. For the inductive step, let $H$ be the multigraph obtained from $G$ by doubling every edge, so that every edge of $H$ has even multiplicity. By \cref{cor:2-factor-properties} applied to $H$, there is an $f$-bounded subgraph $H'\leq H$ (with $f\equiv 2$) consisting of disjoint $2$-cycles and odd cycles and covering every vertex of maximum degree in $H$ (equivalently, in $G$). Replacing each $2$-cycle of $H'$ by a single edge yields an edge set $F\subseteq E(G)$ in which every component of $G[F]$ is $1$- or $2$-regular and every vertex of maximum degree in $G$ has positive degree in $G[F]$. Setting $f(F)$ to one colour and applying induction to $G-F$ (whose maximum degree is strictly smaller) extends $f$ to a cycle-matching colouring of $G$ using at most $\Delta(G)$ colours.
\end{proof}

\begin{remark}
    A third route to \cref{thm:cornaznguyen} goes via simple $2$-matchings (for a definition, consult \cite{LovaszPlummer1986})---the colour classes of a cycle-matching colouring are precisely simple $2$-matchings of $G$, and one can show using the machinery of \cite{LovaszPlummer1986} that $G$ admits a simple $2$-matching saturating every vertex of maximum degree. We do not pursue this route here because the additional information furnished by \cref{cor:2-factor-properties}---in particular, point (3) and the structure of $H[S\cup T]$---is what powers the following key lemma.
\end{remark}

\begin{lemma}\label[lemma]{Lem:criticalLemma}
    Let $G$ be a graph and let $\mathcal C$ be a set of colours. For each vertex $x\in V(G)$, let $L_x, M_x, R_x\subseteq \mathcal C$ be disjoint with union $\mathcal C$ (forming a partition where some sets may be empty), and suppose that $|L_x| + |M_x| \geq d_G(x)$ for every $x\in V(G)$.

    Then there exist disjoint edge sets $F, F'\subseteq E(G)$ together with maps
    \[
        \alpha\colon F\to \bigcup_{x\in V(G)}(\{x\}\times M_x),
        \qquad
        \beta\colon F'\to \mathcal C,
        \qquad
        f\colon E(G)\setminus(F\cup F')\to \mathcal C,
    \]
    such that
    \begin{enumerate}
        \item $\alpha$ is injective, and for each $e\in F$, writing $\alpha(e) = (x, c)$, the vertex $x$ is an endpoint of $e$ and is isolated in the graph $G[f^{-1}(c)]$;
        \item for each $c\in \mathcal C$, $\beta^{-1}(c)$ consists of vertex-disjoint length-$2$ paths $u v w$ with $c\in L_u\cap L_v$ and $c\in R_w$; further, $u, v, w$ are isolated in the graph $G[f^{-1}(c)]$.
        \item $f$ is a cycle-matching colouring of $G - (F\cup F')$, and moreover every monochromatic odd cycle of colour $c$ has all of its vertices in $\{x \in V(G): c\in L_x\}$.
    \end{enumerate}
\end{lemma}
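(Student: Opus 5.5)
We argue by induction on $|E(G)| + \sum_x |L_x|$, processing the colours of $\mathcal C$ one at a time. The model is the proof of \cref{thm:vizing-cycle-matching}: there, one colour class $F_c$ is peeled off and we recurse on $G - F_c$. In the present setting the lists play the role of the degree bound. For a colour $c$ write $V_c^L = \{x : c\in L_x\}$, $V_c^M = \{x : c\in M_x\}$, $V_c^R = \{x : c\in R_x\}$. Call $x$ \emph{tight} if $|L_x|+|M_x| = d_G(x)$; tight vertices are the analogue of maximum-degree vertices.

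\textbf{Step 1: the peeling step.} Fix a colour $c$. We want a set $F_c$ of edges to receive colour $c$ under $f$, together with a few edges sent to $F$ or $F'$, such that deleting these edges and removing $c$ from all lists preserves the hypothesis. Concretely, every tight vertex $x$ with $c\in L_x\cup M_x$ must lose at least one incident edge.
\begin{itemize}
  \item If $c\in M_x$ and $x$ is not covered by $F_c$, we delete one edge $e$ at $x$, put it into $F$, and set $\alpha(e)=(x,c)$. Since $c$ is then removed from the lists, $\alpha$ stays injective, and $x$ is isolated in colour $c$, as property~(1) requires.
  \item If $c\in R_x$, the vertex $x$ may lose $c$ for free, so it need not be covered. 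Such a vertex can still serve as the endpoint $w$ of a $\beta$-path.
  \item The difficult vertices are tight ones with $c\in L_x$. They must be covered by a colour-$c$ structure: a matching edge, an odd cycle inside $V_c^L$, or a $\beta$-path $uvw$ with $u,v\in V_c^L$ and $w\in V_c^R$.
\end{itemize}

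\textbf{Step 2: finding the structure via \cref{cor:2-factor-properties}.} Restrict to the graph $G_c$ whose edges are those of $G$ having at least one endpoint in $V_c^L$ and no endpoint in $V_c^M$. Edges inside $V_c^L$ may lie on odd cycles; edges from $V_c^L$ to $V_c^R$ may only be used as matching edges. We double this graph and apply \cref{cor:2-factor-properties}. The odd cycles of $H$ must lie inside $V_c^L$. To arrange this, take $f\equiv 2$ on $V_c^L$ and analyse the components: an odd cycle passing through an $R$-vertex is broken into a matching plus possibly one leftover path $uvw$ ending in $R$, which becomes a $\beta$-path.

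Suppose the tight $L$-vertices cannot all be covered. Then the deficiency structure $(S,T)$ of properties (2)--(5) gives an independent set $T$ with $N(T)=S$ and a strict Hall surplus $|N(X)\cap T|>|X|$ for every nonempty $X\subseteq S$. We exploit this by showing that an uncovered tight vertex $t\in T$ has all its edges into $S$ with $|S|<|T|$. Counting $\sum_{t\in T}d(t)$ against the list sizes should then show that some neighbour of $t$ has slack, or that colour $c$ lies in its $M$- or $R$-list. Property~(6) is the template: vertices of $T$ that are "maximal" in the appropriate weighted sense can always be covered, by the transversal matroid exchange used in its proof.

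\textbf{Main obstacle.} The hard step is Step 2: showing that the tight $L$-vertices can be saturated in a single colour once $\beta$-paths ending in $V_c^R$ are allowed. This is precisely where the tightness inequality must be converted into a Hall-type condition. If colour-by-colour peeling fails, the fallback is a global statement. We would apply \cref{thm:f-factor} once to an auxiliary bipartite-like multigraph whose vertex set contains one copy of each vertex per colour, with $f$ equal to $2$ on $L$-copies, $1$ on $R$-copies and $0$ on $M$-copies, and use the $M$-copies and the $\alpha$-edges to absorb the deficiency $2|T|-2|S|$.
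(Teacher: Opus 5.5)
There is a genuine gap, and it sits exactly at the step you flagged. Step~2 is not just unproven; it is false. So the hypothesis $|L_x|+|M_x|\ge d_G(x)$ cannot serve as your inductive invariant, whichever colour you peel first.

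\textbf{A counterexample to Step~2.} Let $\mathcal C=\{1,2\}$ and let $G$ be the disjoint union of the paths $a_1z_1b_1$ and $a_2z_2b_2$. Set $L_{a_i}=L_{b_i}=\{i\}$, $R_{a_i}=R_{b_i}=\mathcal C\setminus\{i\}$, $L_{z_i}=\mathcal C$, and make all $M$-lists empty. Every vertex is tight. Suppose you peel colour $c=i$ first (either choice).
\begin{enumerate}
\item $a_i$ and $b_i$ are tight with $c\in L$, and their only edges go to $z_i$.
\item A cycle-matching colour class on a forest is a matching, so colour $c$ takes at most one of $a_iz_i$ and $z_ib_i$.
\item $F$ must be empty, because all $M$-lists are empty.
\item $a_iz_ib_i$ is not an admissible $\beta$-path, since $c\notin R_{a_i}\cup R_{b_i}$.
\end{enumerate}
Hence one of $a_i,b_i$ keeps degree $1$ while its $L\cup M$ becomes empty. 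The reduced instance violates the hypothesis, so your induction cannot be applied. Yet the lemma holds trivially for this $G$: properly $2$-edge-colour each path. So no count of $\sum_{t\in T}d(t)$ against list sizes, and no exchange argument modelled on property~(6), can show that the tight $L$-vertices are saturable in one colour. Your global $f$-factor fallback is only a sketch and does not address this obstruction.

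\textbf{What the paper does instead.} The missing idea is to keep the proof of \cref{thm:vizing-cycle-matching} as the engine and let vertices fall behind their lists.
\begin{itemize}
\item The induction is on $|\mathcal C|$. At each step, \cref{cor:2-factor-properties} is applied to the doubled \emph{whole} current graph, so by property~(6) every vertex of degree $|\mathcal C|$ is spanned. This keeps $\Delta\le|\mathcal C|$ and forces all edges to be used by the end.
\item A vertex left uncovered may drop below the list inequality; it is then \emph{marked}. The paper maintains that marked vertices are pairwise non-adjacent, using that newly uncovered vertices lie in the independent set $T$ with $N_G(T)=S$ (properties~(3) and~(4)).
\item The lists are used only to repair odd cycles of the chosen $2$-matching that do not lie inside $V_c^L$:
  \begin{itemize}
  \item a vertex $x$ with $c\in M_x$ on the cycle yields an $\alpha$-edge at $x$;
  \item otherwise, an $R$-vertex of degree $<|\mathcal C|$ is simply left uncovered;
  \item otherwise, every $R$-vertex on the cycle has degree $|\mathcal C|>|L_x|+|M_x|$ and is therefore marked. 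Marked vertices are pairwise non-adjacent and the cycle is odd, which forces a subpath $uvw$ with $u,v\in V_c^L$ and $w\in V_c^R$. This becomes the $\beta$-path.
  \end{itemize}
\end{itemize}

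Tellingly, your scheme never actually needs $\beta$-paths. An odd cycle through an $R$-vertex can always be broken by dropping that vertex, since your invariant never requires $R$-vertices to be covered. $\beta$-paths become necessary only once full-degree $R$-vertices must be covered to keep $\Delta\le|\mathcal C|$. That is the invariant you should be tracking, not the list inequality at every vertex.
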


\begin{proof}
    We proceed by induction on $|\mathcal C|$, mirroring the proof of \cref{thm:vizing-cycle-matching} using $|\mathcal C|$ instead of $\Delta(G)$ and tracking one additional invariant. During the inductive process, whenever a vertex $x$ is not covered by some colour $c$ at the current step, we say that $x$ is \emph{marked with colour $c$}. We maintain that no two marked vertices are adjacent in $G$ and that every unmarked vertex $x$ satisfies $d_G(x) \leq |L_x| + |M_x|$. No vertex starts marked and the premise $|L_x| + |M_x|\geq d_G(x)$ guarantees the invariant. Further, we maintain that all vertices of $d_G(x) = |\mathcal C|$ will be spanned by the edges removed in that step.

    Fix $c\in \mathcal C$ and let $H$ be the multigraph obtained from $G$ by doubling its edges. By \cref{cor:2-factor-properties}, there is a pair of disjoint vertex sets $(S, T)$ and a subgraph $H'\leq H$ consisting of disjoint $2$-cycles and odd cycles with the listed properties. As before, $H'$ corresponds to an edge set $E'\subseteq E(G)$ whose components are isolated edges and odd cycles. We now mark all vertices $x$ with $d_H(x) = 0$ with the colour $c$, observe that these newly marked vertices are not adjacent to one another.

    Now consider an odd cycle $D\subseteq E'$. If $\forall x\in V(D): c\in L_x$, we keep $E(D)$ in $E'$. Otherwise:
    \begin{itemize}
        \item If $D$ contains a vertex $x$ with $c\in M_x$, pick an edge $xy\in E(D)$, remove it from $E'$, place it in $F$, and set $\alpha(xy) = (x, c)$. We then replace $E(D)$ in $E'$ by a perfect matching of $V(D)\setminus\{x\}$ of edges in $E(D)$ (which exists since $D-x$ is a path with an even number of vertices).
        \item Otherwise, if $D$ contains a vertex $x$ with $c\in R_x$ and $d_G(x) < |\mathcal C|$, we similarly replace $E(D)$ in $E'$ by a perfect matching of $V(D)\setminus\{x\}$ of edges in $E(D)$. We will, in this case, not span $x$ by the edge set removed in this step, which is possible as $d_G(x)<|\mathcal C|$.
        \item Otherwise, every vertex $x$ of $D$ with $c\in R_x$ has $d_G(x) = |\mathcal C|>|L_x| + |M_x|$ and is therefore (by the inductive invariant) marked. Since marked vertices are pairwise non-adjacent and $|V(D)|$ is odd, $D$ must contain a path $uvw$ with $c\in L_u\cap L_v$ and $c\in R_w$. We move the edges $uv, vw$ from $E'$ to $F'$, set $\beta(uv) = \beta(vw) = c$, and replace $E(D)$ in $E'$ by a perfect matching of $V(D)\setminus\{u, v, w\}$.
    \end{itemize}
    Thus, using $\alpha$ and $\beta$, we can ensure that every monochromatic odd cycle in the colour $c$ has all of its vertices in $\{x: c\in L_x\}$. We then set $f(E') = c$ and apply induction to $G' = G - (F \cup F' \cup E')$ with the colour set $\mathcal C\setminus\{c\}$ and, for each vertex $x$, $Q_x' = Q_x\setminus\{c\}$ for $Q\in\{L, M, R\}$. Further note that the only vertices not spanned by the edges assigned to $F, F'$, or $E'$ in this step are marked or have degree less than $|\mathcal C|$.

    Let now $x$ be a vertex with $d_H(x) = 2$. If $x$ is unmarked, then $d_{G'}(x) < d_G(x)$ or $c\in R_x$. In the first case, we observe $d_{G'}(x) \leq d_G(x) - 1 \leq |L_x| + |M_x| - 1 \leq |L_x'| + |M_x'|$ as $c$ can only be in at most one of the sets. In the latter case we have $d_{G'}(x) \leq d_G(x) \leq |L_x| + |M_x| = |L_x'| + |M_x'|$.

    It remains to verify that the vertices marked at the current step (the vertices of $T$ which we leave uncovered by colour $c$) are not adjacent in $G$ to any vertices marked at a previous step. Suppose, for contradiction, that some $x\in T$ has a neighbour $y\in S$ marked with a colour $d\neq c$. At the step where $y$ became marked, we selected an edge set $E'_d$ together with a pair $(S_d, T_d)$ playing the role of $(S, T)$, and $y\in T_d$. Consider the component $D$ of $G[E' \cup E'_d]$ containing $y$. We have $y\in T_d\cap S$. Since $y\in T_d$, $y$ is isolated in $E'_d$; since $y\in S$, $y$ has exactly one incident edge in $E'$, going to some neighbour $y'\in T$. As $y\in T_d$ and $yy'$ lies in $E'\subseteq E(G)$, we have $y'\in N_G(T_d) = S_d$. The same analysis applied to $y'$ shows that $y'$ has exactly one $E'_d$-neighbour in $T_d$ and lies in $S_d\cap T$. Iterating, the component $D$ consists of vertices that alternate between $S_d\cap T$ and $T_d\cap S$, with all vertices $z\in V(D)\setminus\{y\}$ having degree $2$ in $G[E'\cup E'_d]$; only $y$ has degree $1$. This contradicts the fact that the degree sum of every component is even.
\end{proof}

\section{Faithful immersions and proof of the main theorem}\label{sec:immersions}

\begin{definition}
    Let $G$ be a graph and $t\in \mathbb N$. A (\emph{weak}) \emph{$K_t$-immersion} in $G$ consists of a \emph{corner set} $\mathcal K\subseteq V(G)$ of size $t$ together with a family of pairwise edge-disjoint paths in $G$, one for each unordered pair of distinct corners. Unless explicitly indicated, all immersions appearing below are weak.

    Let $\mathfrak A$ be a partition of $V(G)$ into independent sets. A $K_t$-immersion is \emph{$\mathfrak A$-pseudo-faithful} if $|A\cap\mathcal K|\leq 1$ for every $A\in\mathfrak A$. It is \emph{$\mathfrak A$-faithful} if, in addition, for any two corners $u\in A\in \mathfrak A$ and $v\in B\in\mathfrak A$, all edges of the chosen $u$-$v$ path lie in $G[A\cup B]$.
\end{definition}

Throughout the rest of this section, $\mathfrak A$ denotes a $\chi(G)$-colouring of $G$. Since $\alpha(G) = 2$, every class of $\mathfrak A$ has size at most two, and we write
\[
    \mathfrak A_1 = \{A\in\mathfrak A : |A| = 1\}, \qquad \mathfrak A_2 = \mathfrak A\setminus\mathfrak A_1
\]
for the singleton classes and the pair classes, respectively.

\begin{lemma}\label[lemma]{lem:single_point_of_attachment}
    Let $\{u\}, \{v\}\in\mathfrak A_1$ and $A\in\mathfrak A_2$. If $|E(u, A)| = |E(v, A)| = 1$, then $u$ and $v$ share a neighbour in $A$.
\end{lemma}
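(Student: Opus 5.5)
Write $A=\{a,b\}$, and suppose for contradiction that $u$ and $v$ have no common neighbour in $A$. Each of $u$ and $v$ has exactly one neighbour in $A$, so we may assume $N(u)\cap A=\{a\}$ and $N(v)\cap A=\{b\}$. The plan is to recolour $u,v,a,b$ so that $\mathfrak A$ turns into a proper colouring with fewer classes. This contradicts the choice of $\mathfrak A$ as a $\chi(G)$-colouring. The tool is $\alpha(G)=2$, applied to suitable triples of vertices.

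The first step uses the triple $\{u,v,a\}$. We have $ub\notin E(G)$ since $N(u)\cap A=\{a\}$, and $va\notin E(G)$ since $N(v)\cap A=\{b\}$. Also $ab\notin E(G)$, because $A$ is a colour class. Suppose $uv\notin E(G)$. Then $\{u,v,a\}$ has the unique edge $ua$, so $\{v,a\}$ is non-adjacent; likewise $\{u,b\}$ is non-adjacent. Now replace the three classes $\{u\},\{v\},A$ by the two classes $\{u,b\}$ and $\{v,a\}$. Both are independent, so we obtain a proper colouring with $\chi(G)-1$ colours, which is a contradiction. Hence $uv\in E(G)$.

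The second step handles the case $uv\in E(G)$, using the two independent pairs $\{u,b\}$ and $\{v,a\}$. Still replace $\{u\},\{v\},A$ by $\{u,b\},\{v,a\}$. This is again a proper recolouring, since $ub,va\notin E(G)$ and no other adjacency matters for independence of these two classes. So the number of classes drops from three to two, once more contradicting the minimality of $\mathfrak A$.

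Looking back, the first step is unnecessary: the pairs $\{u,b\}$ and $\{v,a\}$ are independent regardless of whether $uv$ is an edge, by the single-attachment hypothesis alone. So the proof reduces to the observation that the three classes $\{u\},\{v\},\{a,b\}$ can be merged into the two independent classes $\{u,b\},\{v,a\}$. This contradicts $|\mathfrak A|=\chi(G)$. The only point needing care is the case distinction that the neighbours are distinct, which is exactly the negation of the conclusion. In fact $\alpha(G)=2$ is not even needed for this argument.
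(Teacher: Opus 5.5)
Your proposal is correct and is essentially the paper's proof: recolour the three classes $\{u\},\{v\},A$ as the two independent classes $\{u,b\},\{v,a\}$, obtaining a proper colouring with $\chi(G)-1$ colours. As you noticed yourself, the case distinction on whether $uv\in E(G)$ is redundant, and the paper's argument likewise never uses $\alpha(G)=2$.
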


\begin{proof}
    Otherwise, writing $A = \{a_1, a_2\}$ with $ua_2\notin E(G)$ and $va_1\notin E(G)$, the partition $(\mathfrak A\setminus \{\{u\},\{v\}, A\})\cup \{\{v, a_1\}, \{u, a_2\}\}$ is a valid colouring of $G$ using $\chi(G) - 1$ colours, a contradiction.
\end{proof}

\begin{lemma}\label[lemma]{lem:clique-stuff}
    Let $\{v\}\in \mathfrak A_1$ and set $\mathfrak S = \{A\in\mathfrak A_2 : |E(v, A)| = 1\}$. For any two classes $\{a_1, a_2\}, \{b_1, b_2\}\in \mathfrak S$, if $va_2, vb_2\in E(G)$, then $a_1b_1\in E(G)$.
\end{lemma}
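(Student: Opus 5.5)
Since each of $\{a_1,a_2\}$ and $\{b_1,b_2\}$ sends exactly one edge to $v$, the hypothesis $va_2, vb_2\in E(G)$ forces $va_1, vb_1\notin E(G)$. The statement presumably intends the two classes to be distinct, so assume this; then $a_1\neq b_1$. The plan is to argue by contradiction in the style of \cref{lem:single_point_of_attachment}. Suppose $a_1b_1\notin E(G)$; we will exhibit a proper colouring with $\chi(G)-1$ classes.

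The first step is to observe that $\{v, a_1, b_1\}$ is independent: $va_1$ and $vb_1$ are non-edges as just noted, and $a_1b_1$ is a non-edge by assumption. This contradicts $\alpha(G)=2$ immediately, and we are done.

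If one wanted a recolouring argument instead, one could proceed as follows. Replace the three classes $\{v\}$, $\{a_1,a_2\}$, $\{b_1,b_2\}$ by the classes $\{v,a_1\}$, $\{b_1\}$, $\{a_2\}$, $\{b_2\}$. This gives no saving in colours, which is why the independence-number argument is the right one. There is no real obstacle beyond noting that $a_1\neq b_1$ and that the single-edge condition from $\mathfrak S$ is exactly what makes $v$ non-adjacent to $a_1$ and $b_1$.
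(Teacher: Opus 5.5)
Your argument is correct and is the paper's own proof: the single-edge condition defining $\mathfrak S$, together with $va_2, vb_2\in E(G)$, rules out $va_1$ and $vb_1$, so $\alpha(G)=2$ applied to $\{v,a_1,b_1\}$ forces $a_1b_1\in E(G)$. Your remark that the two classes must be distinct (so that $a_1\neq b_1$) is a fair clarification the paper leaves implicit; the recolouring digression in your last paragraph is unnecessary and can be dropped.
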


\begin{proof}
    The set $\{v, a_1, b_1\}$ contains an edge by $\alpha(G) = 2$, and the edges $va_1$ and $vb_1$ are absent by the definition of $\mathfrak S$; hence $a_1b_1\in E(G)$.
\end{proof}

\begin{lemma}\label[lemma]{lem:4classes_big_connection}
    Let $\{a_1, a_2\}, \{b_1, b_2\}, \{u\}, \{v\}\in \mathfrak A$ be four distinct classes with $ua_1, vb_1\notin E(G)$. Then $\{u, v, a_2, b_2\}$ induces a $K_4$ in $G$; in particular, $a_2b_2\in E(G)$.
\end{lemma}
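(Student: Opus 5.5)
The plan is to establish the six edges of $\{u,v,a_2,b_2\}$ in two groups. The four edges into $\{a_2,b_2\}$ will come from the maximality of the colouring $\mathfrak A$. The remaining two edges, $uv$ and $a_2b_2$, will come from $\alpha(G)=2$. Throughout we use that $\mathfrak A$ is a $\chi(G)$-colouring, so no recolouring may reduce the number of classes.

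\textbf{Step 1: $ua_2\in E(G)$ and $vb_2\in E(G)$.} Suppose $ua_2\notin E(G)$. Together with $ua_1\notin E(G)$, this makes $\{u,a_1,a_2\}$ independent, since $a_1a_2\notin E(G)$ as $\{a_1,a_2\}$ is a colour class. This contradicts $\alpha(G)=2$. Alternatively, one merges $\{u\}$ into $A$ and obtains a colouring with fewer classes. The same argument gives $vb_2\in E(G)$.

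\textbf{Step 2: $ub_2\in E(G)$ and $va_2\in E(G)$.} The independent-set argument no longer applies directly, so the recolouring trick of \cref{lem:single_point_of_attachment} will be used. Suppose $ub_2\notin E(G)$. Since $vb_1\notin E(G)$, the sets $\{u,b_2\}$ and $\{v,b_1\}$ are independent. Replacing the three classes $\{u\},\{v\},\{b_1,b_2\}$ by these two gives a proper colouring with $\chi(G)-1$ colours, a contradiction. Symmetrically, $va_2\notin E(G)$ would let us replace $\{u\},\{v\},\{a_1,a_2\}$ by $\{u,a_1\},\{v,a_2\}$. Note that only one of the pair classes is needed in each case, and that the hypothesis on the \emph{other} singleton is the one used.

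\textbf{Step 3: $uv\in E(G)$ and $a_2b_2\in E(G)$.} First, $uv\in E(G)$, since otherwise $\{u,v\}$ is independent and merging the two singleton classes lowers the number of colours. For $a_2b_2$, consider the triple $\{a_1,u,b_2\}$ or a similar one; it is cleaner to argue directly by recolouring. Suppose $a_2b_2\notin E(G)$. Then $\{u,a_1\}$, $\{v,b_1\}$ and $\{a_2,b_2\}$ are all independent, and they replace the four classes $\{u\},\{v\},A,B$ with only three. This again contradicts the minimality of $\mathfrak A$.

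The main point to get right is Step 2. There one must notice that the missing edge $ub_2$ is paired with the \emph{given} non-edge $vb_1$, rather than with anything about $u$ and $A$. Once this pairing is seen, every step is a one-line recolouring contradiction.
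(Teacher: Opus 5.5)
Your proof is correct and follows essentially the same route as the paper. You obtain $uv$ from the optimality of $\mathfrak A$ and $ua_2, vb_2$ from $\alpha(G)=2$; the cross edges $ub_2, va_2$ by recolouring three classes with two; and $a_2b_2$ by replacing $\{u\},\{v\},A,B$ with $\{u,a_1\},\{v,b_1\},\{a_2,b_2\}$.
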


\begin{proof}
    Since $\{u, v\}$ cannot be a colour class, $uv\in E(G)$. By $\alpha(G) = 2$, $\{v, b_1, b_2\}$ contains an edge, and the only candidate is $vb_2$; similarly $ua_2\in E(G)$.

    If $va_2\notin E(G)$, then $\{v, a_2\}, \{u, a_1\}$ would be a valid recolouring of three colour classes by two, contradicting the optimality of $\mathfrak A$. The same argument gives $ub_2\in E(G)$.

    Finally, if $a_2b_2\notin E(G)$, then $\{u, a_1\}, \{v, b_1\}, \{a_2, b_2\}$ would be a valid recolouring of four colour classes by three, again a contradiction.
\end{proof}

\begin{lemma}\label[lemma]{lem:faithful_immersion}
    Suppose that for every $A\in \mathfrak A_2$ there is some $\{v_A\}\in \mathfrak A_1$ with $|E(v_A, A)| = 1$. Then $G$ admits an $\mathfrak A$-faithful $K_{\chi(G)}$-immersion.
\end{lemma}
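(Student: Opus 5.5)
The plan is to choose one corner in every colour class and to route each corner pair $u\in A$, $v\in B$ by a path inside $G[A\cup B]$. Faithfulness is then automatic. Edge-disjointness is also free: every edge of $G$ joins two distinct classes (classes are independent), so the edge sets $E(A\cup B)$ for different unordered pairs $\{A,B\}$ are pairwise disjoint. The corner set has size $|\mathfrak A|=\chi(G)$ and meets each class exactly once. Hence the whole task is to choose corners so that, for every pair of classes $A,B$, the two corners lie in the same component of $G[A\cup B]$.

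First I record the structure of $G[A\cup B]$ using $\alpha(G)=2$ and the optimality of $\mathfrak A$.
\begin{itemize}
\item If $A=\{u\}$ and $B=\{v\}$ are both singletons, then $uv\in E(G)$, since otherwise $\{u,v\}$ would merge two classes. So this pair is never a problem.
\item If $A=\{u\}$ and $B$ is a pair class, then $u$ has at least one neighbour in $B$, since otherwise $\{u\}\cup B$ would be an independent triple. So $G[A\cup B]$ is a star centred at $u$ with one or two leaves. The only obstruction is $|E(u,B)|=1$; then the corner of $B$ must be the unique neighbour of $u$.
\item If $A=\{a_1,a_2\}$ and $B=\{b_1,b_2\}$ are both pair classes, applying $\alpha(G)=2$ to every triple $\{a_i,b_1,b_2\}$ and $\{a_1,a_2,b_j\}$ shows that the bipartite graph $G[A\cup B]$ has minimum degree at least $1$. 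Hence it is either connected or a perfect matching $2K_2$. Only the $2K_2$ case constrains the corners: they must be matched to each other.
\end{itemize}

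Next I choose the corners. Each singleton class uses its own vertex. For $A\in\mathfrak A_2$, take the given $v_A$ with $|E(v_A,A)|=1$, write $A=\{a_1,a_2\}$ with $v_Aa_2\in E(G)$, and let $a_2$ be the corner of $A$. By \cref{lem:single_point_of_attachment}, every singleton $\{w\}$ with $|E(w,A)|=1$ shares its neighbour with $v_A$, so its unique neighbour in $A$ is also $a_2$. Every other singleton is adjacent to both vertices of $A$. This settles all singleton--pair interactions.

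The remaining, and main, step is the pair--pair case. Let $A=\{a_1,a_2\}$ and $B=\{b_1,b_2\}$ have corners $a_2$ and $b_2$, chosen via $v_A$ and $v_B$, and suppose $G[A\cup B]\cong 2K_2$. I need $a_2b_2\in E(G)$. If $v_A\neq v_B$, then $v_Aa_1\notin E(G)$ and $v_Bb_1\notin E(G)$, so \cref{lem:4classes_big_connection} applied to $A,B,\{v_A\},\{v_B\}$ gives $a_2b_2\in E(G)$ directly. If $v_A=v_B=v$, then $A,B\in\mathfrak S$ for $v$, so \cref{lem:clique-stuff} gives $a_1b_1\in E(G)$. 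Since $G[A\cup B]$ is a perfect matching containing $a_1b_1$, its other edge is $a_2b_2$. In both cases the corners are adjacent, so every pair of corners is joined inside $G[A\cup B]$, which completes the construction.
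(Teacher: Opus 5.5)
Your proof is correct and follows the paper's argument. You use the same corner choice, justified by \cref{lem:single_point_of_attachment}, and the same case split $v_A\neq v_B$ versus $v_A=v_B$, resolved by \cref{lem:4classes_big_connection} and \cref{lem:clique-stuff}. The differences are cosmetic: you phrase the pair--pair case as ``$G[A\cup B]$ is connected or $2K_2$'' where the paper exhibits the explicit path $c_Ap_Bp_Ac_B$, and you derive edge-disjointness directly from faithfulness (the sets $E(A,B)$ are pairwise disjoint), which is slightly cleaner than the paper's argument via the unique edge $p_Ap_B$.
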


\begin{proof}
    By \cref{lem:single_point_of_attachment}, for every $A\in\mathfrak A_2$ we may label $A = \{p_A, c_A\}$ so that, whenever $\{v\}\in\mathfrak A_1$ and $|E(v, A)| = 1$, we have $vp_A\notin E(G)$ (and consequently $vc_A\in E(G)$). Take
    \[
        \mathcal K = \bigcup\mathfrak A_1 \;\cup\; \{c_A : A\in\mathfrak A_2\}
    \]
    as our corner set, so that $|\mathcal K| = |\mathfrak A| = \chi(G)$.

    The set $\bigcup\mathfrak A_1$ is a clique in $G$ ($\{u, w\}$ cannot be a colour class for distinct $\{u\}, \{w\}\in\mathfrak A_1$). By the labelling, $ac_A\in E(G)$ for every $\{a\}\in\mathfrak A_1$ and every $A\in\mathfrak A_2$. Thus every pair of corners involving at least one singleton corner is realised by a direct edge.

    Now suppose $A, B\in\mathfrak A_2$ are such that $c_Ac_B\notin E(G)$. By hypothesis there exist $\{u\}, \{v\}\in\mathfrak A_1$ with $|E(u, A)| = |E(v, B)| = 1$; by the labelling, $up_A\notin E(G)$ and $vp_B\notin E(G)$. If $u\neq v$, then \cref{lem:4classes_big_connection} applied to $\{u\}, \{v\}, A, B$ would force $c_Ac_B\in E(G)$. Hence $u = v$, and \cref{lem:clique-stuff} (applied at $u$) gives $p_Ap_B\in E(G)$. By $\alpha(G) = 2$, $c_A$ has at least one neighbour in $B$ and $c_B$ at least one in $A$; combined with $c_Ac_B\notin E(G)$, this forces $p_Ac_B, c_Ap_B\in E(G)$. We then use the length-$3$ path $c_A p_B p_A c_B$ as the immersion path between $c_A$ and $c_B$.

    All immersion paths are either direct corner-to-corner edges, or length-$3$ paths whose edges lie in $G[A\cup B]$ for some $A, B\in\mathfrak A_2$; in particular the immersion is $\mathfrak A$-faithful. Edge-disjointness is immediate from the construction: distinct length-$3$ paths $c_A p_B p_A c_B$ use the unique edge $p_Ap_B$ between the relevant non-corner vertices and therefore cannot collide with one another or with any direct edge.
\end{proof}

\begin{proof}[Proof of \cref{thm:main_theorem}]
    We proceed by induction on $|V(G)|$. Among all $\chi(G)$-colourings $\mathfrak A$ of $G$, choose one for which the set
    \[
        \mathfrak X = \bigl\{A\in\mathfrak A_2 \,\bigm|\, \exists\,\{v\}\in\mathfrak A_1 :\, |E(v, A)| = 1 \bigr\}
    \]
    is as large as possible.

    \medskip
    \noindent\textit{Easy cases.}
    If $\mathfrak A_1 = \emptyset$, then every class of $\mathfrak A$ has size two; in particular $|V(G)|$ is even and $\chi(G) = |V(G)|/2$. Removing any vertex $v$ from $G$, we have $\chi(G-v)\geq \lceil (|V(G)|-1)/2\rceil = |V(G)|/2 = \chi(G)$, so $\chi(G-v) = \chi(G)$. By induction, $G-v$ contains an immersion of $K_{\chi(G)}$, and so does $G$.

    If $\mathfrak A_1\neq\emptyset$ but $\mathfrak X = \emptyset$, then every singleton vertex of $\mathfrak A_1$ has only ``full'' connections into each class of $\mathfrak A_2$ (that is, $|E(v, A)|\neq 1$ for every $\{v\}\in\mathfrak A_1$ and every $A\in\mathfrak A_2$). This implies $d_G(v) = |V(G)| - 1$. Removing all singleton vertices from $G$ yields a graph $G'$ with $\chi(G') = \chi(G) - |\mathfrak A_1|$ and $\alpha(G')\leq 2$; by induction, $G'$ contains an immersion of $K_{\chi(G')}$, and the singleton vertices, each adjacent to every other vertex of $G$, extend it to a $K_{\chi(G)}$-immersion of $G$.

    Henceforth, we assume $\mathfrak A_1\neq\emptyset$ and $\mathfrak X\neq\emptyset$.

    \medskip
    \noindent\textit{Setup.}
    Set $\mathfrak Y = \mathfrak A_2\setminus\mathfrak X$. By the inductive hypothesis applied to $G[\bigcup\mathfrak Y]$, there is a $K_{|\mathfrak Y|}$-immersion with corner set $\mathcal C_\mathfrak Y$; by \cref{lem:faithful_immersion} applied to $G[V(G)\setminus\bigcup\mathfrak Y]$ (whose colouring induced by $\mathfrak A$ satisfies the hypothesis of the lemma by the definition of $\mathfrak X$), there is an $\mathfrak A$-faithful $K_{\chi(G)-|\mathfrak Y|}$-immersion with corner set $\mathcal C_\mathfrak X$. For each $A\in\mathfrak X$, write $A = \{p_A, c_A\}$ with $c_A$ the corner. Our task is to merge these two immersions into a $K_{\chi(G)}$-immersion of $G$ by routing edge-disjoint paths between $\mathcal C_\mathfrak Y$ and $\mathcal C_\mathfrak X$. Note that all vertices in $\bigcup \mathfrak A_1$ are connected to all vertices from $\mathcal C_\mathfrak Y$ and we use these edges as paths in our immersion.

    Partition $\mathfrak X$ into sets $(\mathfrak X_v)_{\{v\}\in\mathfrak A_1}$ (some possibly empty) such that every $A\in\mathfrak X_v$ satisfies $|E(v, A)| = 1$. We will connect the corners from $\bigcup\mathfrak X_v$ with those in $\mathcal C_\mathfrak Y$ partition class by partition class. As every corner in $\mathcal C_\mathfrak X$ is contained in one of these sets, by proceeding this way, we do connect all corners with one another.

    Fix $\{v\}\in\mathfrak A_1$ and consider some $X\in\mathfrak X_v$, say $X = \{p_X, c_X\}$. The maximality of $|\mathfrak X|$ implies
    \begin{equation}\label{eq:maximality}
        \bigl|\{Y\in\mathfrak Y : |E(c_X, Y)| = 1\}\bigr| \;\leq\; \bigl|\{A\in\mathfrak X_v : |E(c_X, A)| = 2\}\bigr|,
    \end{equation}
    since otherwise swapping $c_X$ for $v$ in $X$ would yield another $\chi(G)$-colouring with a strictly larger $\mathfrak X$: after the swap, $\mathfrak X_{c_X}$ can be chosen as $\mathfrak X_v$ with the set on the LHS of the inequality added, and the elements of the set on the RHS moved to the new $\mathfrak Y$ or some other $\mathfrak X_{v'}$, if necessary.

    \medskip
    \noindent\textit{Pair types and an auxiliary digraph.}
    For $A\in\mathfrak X_v$ and $y\in\mathcal C_\mathfrak Y$, call the pair $(A, y)$
    \begin{itemize}
        \item \emph{good} if $c_A y\in E(G)$ and $p_A y\in E(G)$;
        \item \emph{semi-deficient} if $c_A y\in E(G)$ and $p_A y\notin E(G)$;
        \item \emph{deficient} if $c_A y\notin E(G)$ and $p_A y\in E(G)$.
    \end{itemize}
    Since $\alpha(G) = 2$ forces at least one edge between $y$ and $A$, every pair falls into exactly one of these three categories.

    For each $A\in\mathfrak X_v$ define $
        L_A = \{y\in\mathcal C_\mathfrak Y : (A, y) \text{ deficient}\}$, $ M_A = \{y\in\mathcal C_\mathfrak Y :\allowbreak (A, y) $ semi-deficient$\}$, and $ R_A = \{y\in\mathcal C_\mathfrak Y :\allowbreak (A, y) \text{ good}\}$,
    so that $\mathcal C_\mathfrak Y = L_A\cup M_A\cup R_A$.

    Define a digraph $D_v$ on the vertex set $\mathfrak X_v\cup\mathfrak Y$ with arc set
    \begin{align*}
        E(D_v) = &\bigl\{AB : A, B\in\mathfrak X_v,\; |E(c_A, B)| = 2\bigr\}\;\cup\;\\&\bigl\{AY : A\in\mathfrak X_v,\;Y\in\mathfrak Y,\; \exists\, y\in Y \setminus\mathcal C_\mathfrak Y: |E(y, A)| = 2\bigr\}.
    \end{align*}
    Each outgoing arc from $A$ corresponds to a length-$2$ ``bridge'' joining $p_A$ to $c_A$ through a non-corner vertex: for an arc $AB$ with $B\in\mathfrak X_v$, the bridge is $p_A p_B c_A$ (the edges exists since $|E(c_A, B)| = 2$ and by \cref{lem:clique-stuff}); for an arc $AY$ with $Y\in\mathfrak Y$, the bridge is $p_A y c_A$ for the $y\in Y\setminus\mathcal C_\mathfrak Y$ with $|E(y, A)| = 2$. Note that these bridges do not use edges from $G[\bigcup\mathfrak Y]$ and are edge-disjoint from one another except if both $AB, BA\in E(D_v)$, in which case the edge $p_Ap_B$ is used twice. Further, none of the edges are used in the faithful immersion on $G[V(G)\setminus\bigcup\mathfrak Y]$, as $|E(c_A, B)| = 2$ implies that this immersion uses the edge $c_Ac_B$ directly and the faithful property ensures that no other edges between $A$ and $B$ are used.

    \begin{claim}\label{claim:digraph-degree}
        For every $A\in\mathfrak X_v$, the out-degree of $A$ in $D_v$ satisfies $d^+_{D_v}(A)\geq |L_A| + |M_A|$.
    \end{claim}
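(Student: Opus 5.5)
The plan is to produce, for each $y\in L_A\cup M_A$, a distinct out-arc of $A$ in $D_v$. Each such $y$ either gives an arc $AY$ directly, or is charged via \eqref{eq:maximality} to an arc $AB$ with $B\in\mathfrak X_v$. Throughout, for a corner $y\in\mathcal C_\mathfrak Y$, let $Y=\{y,y'\}\in\mathfrak Y$ be its class. I will assume for now that $y'\notin\mathcal C_\mathfrak Y$, i.e.\ that the immersion on $G[\bigcup\mathfrak Y]$ is $\mathfrak Y$-pseudo-faithful; this is discussed at the end. The only tool is $\alpha(G)=2$ applied to the triples $\{c_A,y,y'\}$ and $\{p_A,y,y'\}$, using $yy'\notin E(G)$. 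This gives $E(c_A,Y)\neq\emptyset$ and $E(p_A,Y)\neq\emptyset$.

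\emph{Semi-deficient $y$} (so $c_Ay\in E(G)$, $p_Ay\notin E(G)$). Then $p_Ay'\in E(G)$. If also $c_Ay'\in E(G)$, then $|E(y',A)|=2$ with $y'\in Y\setminus\mathcal C_\mathfrak Y$, so $AY$ is an arc of $D_v$. Otherwise $c_A$ is adjacent to exactly $y$ in $Y$, so $|E(c_A,Y)|=1$.

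\emph{Deficient $y$} (so $c_Ay\notin E(G)$, $p_Ay\in E(G)$). Then $c_Ay'\in E(G)$. If $p_Ay'\in E(G)$, again $AY$ is an arc. Otherwise $|E(c_A,Y)|=1$ anyway, since $c_Ay\notin E(G)$.

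Thus every $y\in L_A\cup M_A$ either yields the arc $AY$, or its class $Y$ lies in $\{Y\in\mathfrak Y:|E(c_A,Y)|=1\}$. Under pseudo-faithfulness distinct $y$ give distinct classes $Y$. So the arcs $AY$ obtained are distinct, and the $y$'s of the second kind number at most $|\{Y\in\mathfrak Y:|E(c_A,Y)|=1\}|$. By \eqref{eq:maximality} (with $X=A$) this is at most $|\{B\in\mathfrak X_v:|E(c_A,B)|=2\}|$, which is exactly the number of arcs $AB$ with $B\in\mathfrak X_v$. These arcs go to $\mathfrak X_v$, hence are disjoint from the arcs $AY$ counted before. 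Summing gives $d^+_{D_v}(A)\ge |L_A|+|M_A|$.

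The main obstacle is the case where some $Y$ has both vertices in $\mathcal C_\mathfrak Y$, since the immersion from induction need not be pseudo-faithful. Then $Y\setminus\mathcal C_\mathfrak Y=\emptyset$, so no arc $AY$ exists. Both corners cannot be deficient, since $c_A$ has a neighbour in $Y$, and both cannot be semi-deficient, since $p_A$ has a neighbour in $Y$. In the remaining case one is deficient and one semi-deficient: $Y$ contributes $2$ to $|L_A|+|M_A|$ but only $1$ to the left side of \eqref{eq:maximality}. My first attempt would be to strengthen the induction so that the immersion on $G[\bigcup\mathfrak Y]$ can be taken with at most one corner per class of $\mathfrak Y$. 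Failing that, I would compensate using a class $Y'\in\mathfrak Y$ containing no corner, which must exist since $|\mathcal C_\mathfrak Y|=|\mathfrak Y|$. I would try to show via $\alpha(G)=2$ and \eqref{eq:maximality} that such a $Y'$ supplies an extra arc $AY'$ or an extra unit on the right-hand side.
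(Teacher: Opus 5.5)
\textbf{Gap: the two-corner case is not handled.} Your accounting for classes $Y\in\mathfrak Y$ with exactly one corner is correct, and it is essentially the paper's Case~1. But the claim is not proved, because the case $|Y\cap\mathcal C_\mathfrak Y|=2$ genuinely occurs and you leave it as a plan.

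Your first suggested fix is not available. Nothing makes the inductive immersion of $G[\bigcup\mathfrak Y]$ pseudo-faithful with respect to $\mathfrak Y$. A strengthened hypothesis would have to hold for a \emph{prescribed} colouring, whereas the inductive step itself chooses the colouring to maximise $\mathfrak X$, which is where \eqref{eq:maximality} comes from. So the step would not reproduce the strengthened hypothesis.

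Your diagnosis of the deficit is also incomplete. It is not confined to the deficient/semi-deficient sub-case. Suppose one corner of $Y$ is good and the other semi-deficient. Then $|E(c_A,Y)|=2$ and $Y$ has no non-corner vertex. So $Y$ supplies neither an arc $AY$ nor a unit on the left of \eqref{eq:maximality}, yet it contributes $1$ to $|L_A|+|M_A|$.

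\textbf{The missing argument.} This is how the paper closes the case, and it is short.

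First, pair the classes. Since $\mathfrak A$ is optimal, $\chi(G[\bigcup\mathfrak Y])=|\mathfrak Y|$, so $|\mathcal C_\mathfrak Y|=|\mathfrak Y|$. Counting classes with $0,1,2$ corners then shows that corner-free classes are exactly as numerous as two-corner classes. Mere existence of a corner-free class is not enough; this count is what lets you pair each two-corner class $Y$ \emph{injectively} with a corner-free class $Y'$.

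Second, each such $Y'$ yields one out-arc of $A$ that no corner has used. By $\alpha(G)=2$ we have $|E(c_A,Y')|\ge 1$.
\begin{itemize}
\item If $|E(c_A,Y')|=1$, then $Y'$ lies on the left of \eqref{eq:maximality} and is charged by no corner.
\item If $|E(c_A,Y')|=2$, then $p_A$ has a neighbour $z\in Y'$ by $\alpha(G)=2$. This $z$ satisfies $|E(z,A)|=2$ and $z\notin\mathcal C_\mathfrak Y$, so $AY'\in E(D_v)$.
\end{itemize}

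Third, for the two-corner class $Y$ itself:
\begin{itemize}
\item If $|E(c_A,Y)|=2$, the neighbour of $p_A$ in $Y$ is good. So at most one corner of $Y$ lies in $L_A\cup M_A$, and $Y'$ pays for it.
\item If $|E(c_A,Y)|=1$, then $Y$ is on the left of \eqref{eq:maximality}. Together with $Y'$ it pays for both corners.
\end{itemize}

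Adding this to your one-corner accounting gives $d^+_{D_v}(A)\ge|L_A|+|M_A|$. The arcs $AY$ and $AY'$ have distinct heads, and the units on the left of \eqref{eq:maximality} come from distinct classes. These units are bounded by the number of arcs $AB$ with $B\in\mathfrak X_v$.
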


    \begin{proof}
        Fix $A\in\mathfrak X_v$. For $y\in\mathcal C_\mathfrak Y$, let $Y\in\mathfrak Y$ denote the class containing $y$. There are two cases according to whether $Y\cap\mathcal C_\mathfrak Y = \{y\}$ or $|Y\cap\mathcal C_\mathfrak Y| = 2$.

        \emph{Case 1: $Y\cap\mathcal C_\mathfrak Y = \{y\}$.}
        Here $y$ is the unique corner of its class, and the other vertex $y^\star$ of $Y$ is not a corner. If $y\in L_A$ (so $c_Ay\notin E(G)$), then $|E(c_A, Y)|= 1$; if $y\in M_A$, then we have $p_Ay\not\in E(G)$, which implies $p_Ay^\star\in E(G)$. Then, either $|E(c_A, Y)| = 1$, or $|E(y^\star, A)| = 2$. If $|E(c_A, Y)| = 1$, by \eqref{eq:maximality} applied to $A$, each such ``deficiency'' on the $\mathfrak Y$-side is matched by some $A'\in\mathfrak X_v$ with $|E(c_A, A')| = 2$, giving an outgoing arc from $A$. In the case $|E(y^\star, A)|=2$, $AY$ is an arc in $E(D_v)$.

        \emph{Case 2:} $Y\cap\mathcal C_\mathfrak Y = \{y, y'\}$ with $y\neq y'$.
        Choose, for each such $Y\in\mathfrak Y$ contributing two corners, a class $Y'\in\mathfrak Y$ with $Y'\cap\mathcal C_\mathfrak Y = \emptyset$. (Such classes exist in sufficient number: $\mathfrak Y$ is a $\chi(G[\bigcup\mathfrak Y])$-colouring of it, so a $K_{|\mathfrak Y|}$-immersion uses precisely $|\mathfrak Y|$ corners from $|\bigcup\mathfrak Y| = 2|\mathfrak Y|$ vertices, leaving precisely the same number of classes with no corners as classes with two corners; a careful pairing gives a class $Y'$ for each $Y$.) Observe that $Y'$ will necessarily produce an arc for $A$. If $|E(c_A, Y')| = 2$, then at least one vertex in $Y'$ will have all of $A$ as neighbours and then $AY'\in E(D)$. Otherwise, \eqref{eq:maximality} yields an outgoing arc. As for $Y$, if $|E(c_A, Y)| = 2$, then one element of $Y$ must form a good pair with $A$ and hence, from both corners in $Y$, we only need one outgoing arc, which is provided by $Y'$. Otherwise,  we have $|E(c_A, Y)| = 1$ and \eqref{eq:maximality} provides the required arc.

        In either case, distinct $y\in L_A\cup M_A$ end up producing distinct outgoing arcs from $A$. This yields $d^+_{D_v}(A)\geq |L_A| + |M_A|$.
    \end{proof}

    \medskip
    \noindent\textit{Assigning paths.}
    By the previous claim, we may select a sub-digraph $D\leq D_v$ with $V(D) = V(D_v)$ and $d^+_D(A) = |L_A| + |M_A|$ for every $A\in\mathfrak X_v$.

    We will assign a colour $g(e)\in\mathcal C_\mathfrak Y$ to a subset of  arcs $e\in E(D)$ in such a way that for every $A$, $g$ realises an injection from  $L_A$ to the outgoing arcs of $A$, so that distinct corners $y\in L_A$ receive disjoint sets of incident immersion edges connecting $y$ to $c_A$. Concretely, an arc $AB$ assigned the colour $y\in L_A$ encodes the immersion path that starts at $y$, follows either the edge $y p_A$ or---if permissible---$y p_B$, and then uses the bridge $p_A\to c_A$ described by the arc, or the edge $p_Bc_A$ directly.

    The only potential clash among the naive bridges mentioned earlier occurs when both $AB$ and $BA$ are present in $D$ with $A, B\in\mathfrak X_v$: their bridges both use the edge $p_A p_B$. To resolve this, consider the auxiliary undirected graph
    \[
        H = \bigl(\mathfrak X_v,\; \{AB : AB,\, BA \in E(D)\}\bigr).
    \]
    Since $d^+_D(A) = |L_A| + |M_A|$, every vertex of $H$ satisfies $d_H(A)\leq |L_A| + |M_A|$, so we may apply \cref{Lem:criticalLemma} to $H$ with the colour set $\mathcal C_\mathfrak Y$ and the partitions $L_A, M_A, R_A$. This produces $F, F'\subseteq E(H)$ and maps $\alpha\colon F\to \bigcup\{A\}\times M_A$, $\beta\colon F'\to \mathcal C_\mathfrak Y$, and a cycle-matching colouring $f$ of $H - (F\cup F')$.

    We translate these structures back into a global assignment $g$ as follows.
    \begin{itemize}
        \item If $AB\in F$ with $\alpha(AB) = (A, y)$, then we remove the arc $AB$ from $D$. As $y\in M_A$, $c_Ay\in E(G)$ and we can directly connect these corners. This removes the necessity of using $p_Ap_B$ twice, regardless of the assignment of $g(BA)$.
        \item If $\{AB, BC\}\subseteq F'$ form a length-$2$ path with $\beta(AB) = \beta(BC) = y\in L_A\cap L_B\cap R_C$ (in particular $yp_A, yp_B\in E(G)$ and $yp_C, yc_C\in E(G)$), assign $g(AB) = g(BC) = y$ but use the modified bridges $y p_C c_B$ and $y p_B c_A$, avoiding the edges $p_Ap_B$ and $p_Bp_C$.
        \item If $D$ is a monochromatic odd cycle of colour $y$ in $f$ with vertex set $V(D)= \{A_i:i\in \mathbb Z_l\}$, then $y\in L_{A_i}$ for every $i\in\mathbb Z_l$ by \cref{Lem:criticalLemma}; assign $g(A_iA_{i+1}) = y$ and use the bridges $y p_{A_i} c_{A_{i+1}}$, avoiding every edge $p_{A_i}p_{A_{i+1}}$.
        \item If $AB$ is an isolated edge in $f$ of colour $y$: if $y\in R_A\cap R_B$ and $g(AB) = z$ (wlog), we use the bridge $z p_A y p_Bc_A$ to connect $A$ and $z$, whereas connecting $g(BA)$ and $B$ uses the naive bridge $g(BA)p_Bp_Ac_B$. If $y\in M_A$, we drop $AB$; symmetrically for $M_B$. If $y\in L_A\cap L_B$, we colour $g(AB) = g(BA) = y$ and treat it as a $2$-cycle. If $y\in L_A$ and $y\in R_B$, we colour $g(AB) = y$ and use the path $yp_Bp_A$.
    \end{itemize}
    
    \cref{Lem:criticalLemma} guarantees that the deletions imposed by $\alpha$ and $f$ remove at most $|M_A|$ outgoing arcs at each $A$, so we always retain enough out-arcs to cover $L_A$. We now have assigned or thrown away one arc of each double arc of $D$. For each $y\in L_A$ not assigned yet, we choose an arbitrary outgoing arc to assign $y$ to and use the edge $yp_A$ and the naive path between $p_A$ and $c_A$ given by the arc.

    To show that all paths are edge-disjoint, consider the following: All edges used to form non-trivial paths are of the form $yp_A$, $p_Ap_B$ or $p_Bc_A$ for $A, B\in \mathfrak X_v, y\in \mathcal C_\mathfrak Y$. By construction, we avoid using edges of the form $p_Ap_B$ twice. The edge $p_Bc_A$ is used exactly in the path that connects $A$ to $g(AB)$ and therefore can not be used twice either.

    Assume that $yp_A$ is used twice in the construction. If $(A, y)$ is good, then $yp_A$ can only be used if there exists a $B$ with $BA\in F', \beta(BA) = y$ or a $C$ with $f(CA) = y$. As these are mutually exclusive ($A$ either has degree one in $G[f^{-1}(c)]$ or is isolated there and has degree one in $G[\beta^{-1}(c)]$), this can not be. Hence, $(A, y)$ is not good. It can neither be semi-deficient, as then $yp_A\not\in E(G)$, so $(A, y)$ is deficient. In this case, $yp_A$ is used either in the path connecting $y$ to $c_A$ or there is exactly one neighbour $B$ of $A$, for which $yp_A$ is used to connect $y$ to $c_B$. By the construction above, whenever the latter is the case, $A$ has exactly one neighbour $C$ such that $yp_Cc_A$ is the path connecting $y$ to $c_A$. But the former can, by definition, only happen once. Hence the edges of the type $yp_A$ are also used at most once.

    This concludes matching the corners in $\bigcup\mathfrak X_v$ to all vertices of $\mathcal C_\mathfrak Y$. Doing this for each $v\in \mathfrak A_1$ yields paths between every corner in $\mathcal C_\mathfrak X$ and $\mathcal C_\mathfrak Y$ and completes the $K_{\chi(G)}$-immersion.
\end{proof}

\section*{Acknowledgements}

I would like to express my sincere thanks to my advisor, Matthias Kriesell, for his guidance, encouragement, and many helpful conversations. 
 
\bibliographystyle{plain}
\bibliography{references}

\end{document}